\theoremstyle{plain}
\newtheorem{theorem}{Theorem}[section]
\newtheorem{lemma}[theorem]{Lemma}
\theoremstyle{definition}
\newtheorem{thm}{Theorem}
\theoremstyle{remark}
\newtheorem{claim}[theorem]{Claim}
\newtheorem*{acknowledgments}{Acknowledgments}
\numberwithin{equation}{section}
\numberwithin{figure}{section}
\newcommand{\bd}{\begin{description}}   
\newcommand{\ed}{\end{description}} 
\newcommand{\ba}{\begin{array}}      \newcommand{\ea}{\end{array}} 
\newcommand{\bc}{\begin{center}}     \newcommand{\ec}{\end{center}} 
\newcommand{\be}{\begin{enumerate}}  \newcommand{\ee}{\end{enumerate}} 
\newcommand{\beq}{\begin{eqnarray}}  \newcommand{\eeq}{\end{eqnarray}} 
\newcommand{\beQ}{\begin{eqnarray*}} \newcommand{\eeQ}{\end{eqnarray*}} 
\newcommand{\bi}{\begin{itemize}}    \newcommand{\ei}{\end{itemize}}
\newcommand{\dessin}[2]{
  \vcenter{\hbox{\includegraphics[height=#1]{#2.pdf}}}}
\begin{document} 
\title[Linking, framing and the Kontsevich integral]{Linking coefficients and the Kontsevich integral} 
\author[J.B. Meilhan ]{Jean-Baptiste Meilhan} 
\address{Univ. Grenoble Alpes, CNRS, IF, 38000 Grenoble, France}
\email{jean-baptiste.meilhan@univ-grenoble-alpes.fr}
%
%
\keywords{Kontsevich integral, Jacobi diagrams, linking number, framing}
\begin{abstract} 
It is well known how the linking number and framing can be extracted from the degree $1$ part of the (framed) Kontsevich integral. 
This note gives a general formula expressing any product of powers of these two invariants as combination of coefficients in the Kontsevich integral. 
This allows in particular to express the sum of all coefficients of a given degree in terms of the linking coefficients. The proofs are purely combinatorial. 
\end{abstract} 

\maketitle

\section{Introduction}

The \emph{Kontsevich integral} is a  strong invariant of framed oriented knots and links, which dominates all rational finite type invariants and all Witten--Reshetikhin--Turaev quantum invariants, in the sense that any other factors through it.
It takes values in a certain space of \emph{chord diagrams}, which are copies of the oriented unit circle, endowed with a number of chords, which are pairings of pairwise disjoint points on the circles; chord diagrams naturally come with a degree, which is given by the number of chords. 
Kontsevich defined this invariant in terms of iterated integrals, what can be seen as a far-reaching generalization of the Gauss integral for the linking number of two curves \cite{Kontsevich}.  As a matter of fact, it is well-known that the linking number 
appears as the simplest coefficient in the Kontsevich integral. Specifically, given a framed link $L$, denoting by $C_L[D]$ the coefficient\footnote{The reader who feels nervous about the well-definedness of this notation is referred to \S 3.1.}
 of a chord diagram $D$ in the Kontsevich integral of  $L$, and denoting by $\ell_{ij}$ the linking number of the $i$th and $j$th components, 
we have
\begin{equation}\tag{$1_1$}\label{eq:lk}
 \ell_{ij}(L) = C_L[_i\dessin{0.5cm}{D12}\!\!\textrm{ }_j]. 
\end{equation}
Denoting half the framing of the $i$th component of $L$ by $\ell_{ii}(L)$, it is also well-known that 
\begin{equation}\tag{$2_1$}\label{eq:fr}
  \ell_{ii}(L)=\frac{1}{2} fr_i(L)= C_L[\dessin{0.5cm}{D11}\!\textrm{ }_i].
\end{equation}
Hence the degree $1$ part of the Kontsevich integral of a link is fully characterized by the linking coefficients, \emph{i.e.} the coefficients of the linking matrix. 

The main result of this note is Theorem \ref{thm:main} below, 
which generalizes these two elementary results.  
This provides a general (\emph{i.e.} in all degrees) formula identifying certain combinations of coefficients in the Kontsevich integral in terms of the coefficients of the linking matrix. 
A number of works investigate, in a similar way, how combinations of coefficients in the Kontsevich integral can be expressed in terms of classical invariants of knot theory, see for example \cite{Stanford,HM,Okamoto1997,Okamoto1998,C,CM}, although such results are often only given for low degree terms. 
\medskip

Let $\mathcal{S}_m$ be the set of symmetric matrices of size $m$ with coefficients in $\mathbb{N}$. 
Given $S=(s_{ij})_{i,j}\in \mathcal{S}_m$, we define $\mathcal{D}_S(m)$ as the set of all possible chord diagrams on $m$ circles with exactly $s_{ij}$ chords of type $(i,j)$ for all $i,j$. Here, a \emph{type $(i,j)$ chord} is a chord whose endpoints sit on components $i$ and $j$; in particular, a type $(i,i)$ chord has both endpoints on the $i$th component.

\begin{thm}\label{thm:main}
Let $L$ be an $m$-component framed oriented link in $S^3$ and let  $S=(s_{ij})_{i,j}\in \mathcal{S}_m$. We have 
 $$  \ell_S(L):= \prod_{1\le i\le j\le m} \frac{1}{s_{ij}!} \ell_{ij}(L)^{s_{ij}} = \sum_{D \in \mathcal{D}_S(m)} C_L[D]. $$  
\end{thm}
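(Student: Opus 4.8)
The plan is to record, simultaneously for all $S\in\mathcal{S}_m$, the right-hand side $\sum_{D\in\mathcal{D}_S(m)}C_L[D]$ as a coefficient in the image of $Z(L)$ under a suitable morphism of coalgebras, and then to exploit the group-like property of the Kontsevich integral. Let $\mathcal{A}$ be the graded completion of the space of chord diagrams on $m$ circles modulo the $4T$ relations, equipped with its usual coproduct $\Delta(D)=\sum_{D=D'\sqcup D''}D'\otimes D''$, the sum running over all ways of partitioning the chords of $D$ into an ordered pair of subdiagrams; thus $Z(L)\in\mathcal{A}$ and $Z(L)=\sum_D C_L[D]\,D$. Let $B=\mathbb{Q}[[x_{ij}:1\le i\le j\le m]]$ be the completed polynomial Hopf algebra in which each generator $x_{ij}$ is primitive of degree $1$. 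I would introduce the degree-preserving $\mathbb{Q}$-linear map $\Phi\colon\mathcal{A}\to B$ sending a chord diagram with exactly $n_{ij}$ chords of type $(i,j)$ to the monomial $\prod_{i\le j}x_{ij}^{n_{ij}}$.

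The first point to establish is that $\Phi$ is well defined. Each $4T$ relator is a combination $\pm(D_1-D_2-D_3+D_4)$ of four chord diagrams obtained from one another by sliding a single chord endpoint along one skeleton component; in particular all four diagrams carry the same numbers $n_{ij}$ of chords of each type, and since two of the coefficients equal $+1$ and two equal $-1$, $\Phi$ sends the relator to $\bigl(\prod_{i\le j}x_{ij}^{n_{ij}}\bigr)(1-1-1+1)=0$. This also shows at once that $\sum_{D\in\mathcal{D}_S(m)}C_L[D]$ does not depend on how $Z(L)$ is expressed as a combination of chord diagrams, for it is precisely the coefficient of the monomial $\prod_{i\le j}x_{ij}^{s_{ij}}$ in $\Phi(Z(L))$. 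Next I would check that $\Phi$ is a morphism of coalgebras respecting counits; the counit statement is immediate, and for the coproduct one uses that, given a diagram $D$ with $n_{ij}$ chords of type $(i,j)$, the number of partitions $D=D'\sqcup D''$ with $n_{ij}(D')=k_{ij}$ for all $i\le j$ equals $\prod_{i\le j}\binom{n_{ij}}{k_{ij}}$, which is exactly the binomial identity matching $(\Phi\otimes\Phi)\Delta(D)$ with $\Delta_B\bigl(\prod_{i\le j}x_{ij}^{n_{ij}}\bigr)=\prod_{i\le j}(x_{ij}\otimes 1+1\otimes x_{ij})^{n_{ij}}$.

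With $\Phi$ in hand, the argument proceeds as follows. Since the framed Kontsevich integral is group-like, $\Delta Z(L)=Z(L)\otimes Z(L)$, and since $\Phi$ is a counital coalgebra morphism, $\Phi(Z(L))$ is group-like in $B$. The group-like elements of the power series Hopf algebra $B$ are exactly the exponentials $\exp(\ell)$ of linear forms $\ell=\sum_{i\le j}a_{ij}x_{ij}$ — indeed the logarithm of a group-like element is primitive and the primitives of $B$ are the linear forms — and such $\ell$ is the degree-$1$ part of $\exp(\ell)$. Hence $\Phi(Z(L))=\exp(\ell_L)$ where $\ell_L$ is the degree-$1$ part of $\Phi(Z(L))$, i.e. (as $\Phi$ is graded) the image under $\Phi$ of the degree-$1$ part of $Z(L)$. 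By $(\ref{eq:lk})$ and $(\ref{eq:fr})$ the degree-$1$ part of $Z(L)$ is $\sum_{i\le j}\ell_{ij}(L)\,\theta_{ij}$, where $\theta_{ij}$ is the chord diagram carrying a single chord of type $(i,j)$, and $\Phi(\theta_{ij})=x_{ij}$, so $\ell_L=\sum_{i\le j}\ell_{ij}(L)\,x_{ij}$. Therefore
$$\Phi(Z(L))=\exp\!\Bigl(\sum_{1\le i\le j\le m}\ell_{ij}(L)\,x_{ij}\Bigr)=\prod_{1\le i\le j\le m}\exp\!\bigl(\ell_{ij}(L)\,x_{ij}\bigr),$$
whose coefficient on the monomial $\prod_{i\le j}x_{ij}^{s_{ij}}$ is $\prod_{i\le j}\frac{1}{s_{ij}!}\ell_{ij}(L)^{s_{ij}}=\ell_S(L)$. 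Since the same coefficient equals $\sum_{D\in\mathcal{D}_S(m)}C_L[D]$ by construction of $\Phi$, comparing the two yields the theorem.

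Most of the work is formal Hopf-algebra bookkeeping: the compatibility of $\Phi$ with the coproducts reduces to the binomial identity above, and the description of the group-likes of $B$ is standard. The one place where the combinatorics genuinely enters, and the point I would be most careful about, is the well-definedness of $\Phi$ — that the numbers of chords of each type $(i,j)$ give a grading of $\mathcal{A}$ that survives the $4T$ relations, and that each $4T$ relator carries two $+1$'s and two $-1$'s. This is immediate from the explicit shape of the $4T$ relation, but it is what makes the whole scheme work, since without it neither $\Phi$ nor even the left-hand side $\sum_{D\in\mathcal{D}_S(m)}C_L[D]$ would be meaningful.
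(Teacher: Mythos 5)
Your argument is essentially correct, but it takes a genuinely different route from the paper. The paper never uses the coalgebra structure of $\mathcal{A}(m)$: it proceeds by induction on $\vert S\vert$, showing via the local formula (\ref{eq:Xk}) for $Z$ at a crossing (through Claim \ref{lem:key} and Lemma \ref{lem:key2}) that $\mathcal{L}_S$ and $\ell_S$ obey the same crossing-change variation, and then checking equality on split unions of unknots with framing $0$ or $1$, where the wheels formula (\ref{eq:nu}) for $\nu$ is used to evaluate $\mathcal{L}_S$ (Claim \ref{claimfinal}). That proof is elementary and self-contained, which is what the abstract means by ``purely combinatorial''. Your proof instead derives the theorem formally from the group-like property $\Delta Z(L)=Z(L)\otimes Z(L)$: the coalgebra map to $\mathbb{Q}[[x_{ij}]]$ reduces everything to the degree-$1$ identities (\ref{eq:lk}) and (\ref{eq:fr}), which is shorter and conceptually illuminating (the theorem is exactly ``group-likeness plus degree one''). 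The price is that group-likeness of the combinatorial framed Kontsevich integral -- for $X_\pm$, $\sqrt{\nu}$, the associator, and the gluing operations -- is a nontrivial standard fact that must be imported from the literature, precisely the kind of input the paper's argument avoids.

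One justification is wrong as stated, and it occurs at the point you yourself flag as critical. In a $4T$ relator the four diagrams do \emph{not} all carry the same numbers $n_{ij}$ of chords of each type: the endpoint being slid sits, in two of the terms, next to one endpoint of the fixed chord and, in the other two, next to its other endpoint, and these two endpoints may lie on different circles of the skeleton. The relator is thus in general of the form $\pm\bigl((D_1-D_2)-(D_3-D_4)\bigr)$, where $D_1,D_2$ share one type vector and $D_3,D_4$ share a possibly different one. Your map still kills it, because within each pair the signs are opposite and the monomials equal; but this pairing of signs with type classes is exactly what must be verified (it is the content of the paper's Claim \ref{rem:inv}), and the ``all four diagrams have the same monomial, and the signs sum to zero'' shortcut would not detect a wrong sign pattern. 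With that sentence repaired, the rest of the Hopf-algebra bookkeeping -- the binomial identity for the coproduct, primitives of $\mathbb{Q}[[x_{ij}]]$ being the linear forms, group-likes being exponentials of primitives, and the degree-$1$ identification -- is sound.
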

This general formula has several noteworthy consequences. 

On one hand, if $S$ has a single nonzero entry $n=s_{ij}$ with $i<j$, we obtain a generalization of (\ref{eq:lk}) to all powers of the linking number: 
\begin{equation}\tag{$1_n$}\label{eq:lkn}
 \frac{1}{n!}  \ell_{ij}(L)^n = \sum_{D \in \mathcal{M}^{ij}_n(m)} C_L[D], 
\end{equation}
where $\mathcal{M}^{ij}_n(m)$ denotes the set of all possible degree $n$ chord diagrams on $m$ circles whose $n$ chords are of type $(i,j)$. 
\\
Similarly, if $S$ has a single nonzero entry $n=s_{ii}$ on the diagonal, we obtain that 
\begin{equation}\tag{$2_n$}\label{eq:frn}
 \frac{1}{n!2^n}  fr_i(L)^n = \sum_{D \in \mathcal{I}^i_n(m)} C_L[D], 
\end{equation}
where $ \mathcal{I}^i_n(m)$ denotes the set of all degree $n$ chord diagrams on $m$ circles, such that all $n$ chords are on the $i$th circle. 

On the other hand, the set $\mathcal{D}_{k}(m)$ of all degree $k$ chord diagrams on $m$ circles is partitioned into the sets $\mathcal{D}_S(m)$ for all matrices $S$ in $\mathcal{S}_m$ with $\vert S\vert = k$, where $\vert S\vert = \sum_{1\le i\le j\le m} s_{ij}$ is the \emph{degree} of $S$. Thus we have: 
\begin{equation}\tag{$3$}
 \sum_{D \in \mathcal{D}_{k}(m)} C_L[D] = \sum_{S\in \mathcal{S}_m\textrm{ ; $\vert S\vert = k$}} \ell_S(L). 
\end{equation}
\noindent This expresses the sum of \emph{all} coefficients of degree $k$ in the Kontsevich integral in terms of the linking coefficients.

\begin{acknowledgments}
This work is partially supported by the project AlMaRe (ANR-19-CE40-0001-01) of the ANR. 
The author wishes to thank the referee for pointing out the argument presented in Section \ref{sec:bonus}. He also thanks Georges Abitbol and Benjamin Audoux for inspiring discussions. 
\end{acknowledgments}

\section{The framed Kontsevich integral in a nutshell}\label{sec:K}

We briefly review the combinatorial definition of the framed Kontsevich integral, as given by Le and Murakami in \cite{LM}; 
see also \cite[\S 6]{Ohtsuki}.

A \emph{chord diagram} $D$ on the disjoint union $\dessin{0.25cm}{D10}^{\,m}$ of $m$ copies of the oriented circle, is a collection of copies of the unit interval, such that the set of all endpoints is embedded into $\dessin{0.25cm}{D10}^{\,m}$.
We call \emph{chord} any of these copies of the interval.
The  \emph{degree} of $D$ is defined as its number of  chords.
\\
In figures, bold lines depict (portions of) $\dessin{0.25cm}{D10}^{\,m}$, and dashed lines are used for chords.

We denote by $\mathcal{A}(m)$ the $\mathbb{Q}$-vector space generated by all chord diagrams on $\dessin{0.25cm}{D10}^{\,m}$, modulo the \emph{4T relation}: 
 $$ \dessin{1cm}{4T}. $$

We now describe the source of the framed Kontsevich integral. 
A framed q-tangle is an oriented tangle, equipped with a framing and a parenthesization on both sets of boundary points. 
Any such tangle can be decomposed into copies of the q-tangles $I$, $X_{\pm}$, $C_{\pm}$ and $\Lambda_{\pm}$ shown in Figure \ref{fig:ehmanitselementarystuff}, along with those obtained by reversing the orientation on any component. 
\begin{figure}[!h]
\includegraphics[scale=0.8]{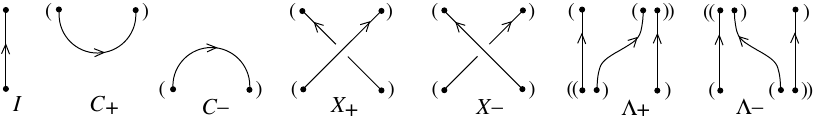}
\caption{The elementary q-tangles $I$, $X_{\pm}$, $C_{\pm}$ and $\Lambda_{\pm}$}\label{fig:ehmanitselementarystuff}
\end{figure}
Such a decomposition is not unique, but a complete set of relations is known, relating any two possible decompositions, see  \cite[Thm.~6.5]{Ohtsuki}. 
The \emph{framed Kontsevich integral} $Z$ can thus be determined by specifying its values on the above q-tangles so that all relations are satisfied.  
This is done as follows. 

We set $Z(I)$ to be the portion of diagram $\uparrow$ without chord. 

For the positive and negative crossings $X_\pm$, we set
 \begin{equation}\label{eq:Xk}
  Z(X_\pm)= \sum_{k\ge 0} \frac{(\pm 1)^k}{2^k k!} X_k\textrm{, where }X_k = \dessin{0.7cm}{Xk}. 
 \end{equation}

Next, set 
$Z(C_\pm)=\sqrt{\nu}$,
where $\nu\in \mathcal{A}(1)$ is the Kontsevich integral of the $0$-framed unknot $U_0$, which was explicitly computed in \cite{BNGT} as follows: 
\begin{equation}\label{eq:nu}
  \nu = \chi\Big(\textrm{exp}_\sqcup\big(\sum_{n\ge 1} b_{2n} W_{2n}\big) \Big), 
\end{equation}
where 
$W_{2n}$ is a \emph{wheel}, that is a unitrivalent diagram of the form $\begin{array}{c}\includegraphics[scale=0.7]{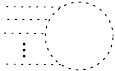}\end{array}$ 
      with $2n$ univalent vertices, 
$b_{2n}$ is the coefficient of $x^{2n}$ in the Taylor expansion of $\frac{1}{2}\textrm{ln}\frac{\textrm{sinh}(x/2)}{x/2}$, 
$\textrm{exp}_\sqcup$ stands for the exponential with respect to the disjoint union of diagrams, 
and the map $\chi$ takes the average over all possible ways of attaching the univalent vertices of a (disjoint union of) wheel(s) to $\dessin{0.25cm}{D10}$, 
      then applies recursively the STU relation below to produce a combination of chord diagrams: 
      $$ \dessin{1cm}{stu}. $$

Lastly, we set $Z(\Lambda_\pm)=\Phi^{\pm 1}$, where $\Phi$ is  a \emph{Drinfeld associator}; 
we do not further discuss here this important ingredient of the construction, as it will play no role in our argument.
The interested reader is referred to \cite[App. D]{Ohtsuki}. 

\section{Linking coefficients and the Kontsevich integral}

The purpose of this section is to prove Theorem \ref{thm:main}. This will be done in Subsection \ref{sec:proof}, after setting some notation and preliminary results in the next two subsections. 

\subsection{Some notation}

Let $A$ be an element of $\mathcal{A}(m)$. Let $D$ be a chord diagram on $m$ circles.
We denote by $C[D](A)$ the coefficient of $D$ in $A$.
In particular, we set 
$$C_L[D] := C[D](Z(L)),$$
for a framed oriented link $L$, that is, $C_L[D]$ denotes \lq the coefficient\rq\, of diagram $D$ in the Kontsevich integral of $L$.
This quantity is of course in general not well-defined since 
an element of $\mathcal{A}(m)$ consists of diagrams subject to the $4T$ relation, but taking an appropriate combination of such coefficients shall yield a link invariant, see  Claim 
\ref{rem:inv} below. 

We denote by $C[D]$ the assignment $A\mapsto C[D](A)$ and, abusing notation, we still denote by  $C[D]$ the precomposition $L\mapsto C_L[D]$ with the Kontsevich integral. 
\medskip 

Let $S=(s_{ij})_{i,j}\in \mathcal{S}_m$ be a symmetric matrix of size $m$ with entries in $\mathbb{N}$.
We set 
$$ \mathcal{L}_S := \sum_{D \in \mathcal{D}_S(m)} C[D], $$
where  $\mathcal{D}_S(m)$ is the set of all chord diagrams on $m$ circles with exactly $s_{ij}$ chords of type $(i,j)$ for all $i\le j$, as defined in the introduction. 
\begin{claim}\label{rem:inv}
This formula yields a well-defined map on $\mathcal{A}(m)$, and in particular defines an $m$-component link invariant. 
\end{claim}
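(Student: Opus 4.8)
The plan is to show that $\mathcal{L}_S$ is invariant under the $4T$ relation, since by construction it is already a linear functional on the free vector space of chord diagrams on $m$ circles; descending to $\mathcal{A}(m)$ then makes it a well-defined map, and precomposing with $Z$ yields a link invariant because $Z(L) \in \mathcal{A}(m)$ is a link invariant. So everything reduces to a purely combinatorial check: for any fixed instance of the $4T$ relation, the signed sum of the four diagrams involved must lie in the kernel of $\mathcal{L}_S$.

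First I would set up the bookkeeping. A $4T$ relation is supported on three strands of $\dessin{0.25cm}{D10}^{\,m}$ (possibly belonging to fewer than three distinct circles), with two of the four chords fixed and a ``moving'' chord whose endpoint slides past a vertex; outside a small ball all four diagrams agree. The key observation is that the $4T$ move does not change the number of chords of each type $(i,j)$: the moving endpoint stays on the same circle in all four terms, and the other endpoint of the moving chord, as well as both endpoints of each of the two static chords, are unchanged. Hence all four diagrams in a given $4T$ relation lie in the \emph{same} $\mathcal{D}_T(m)$ for a single matrix $T$. Therefore, if $T \ne S$, then $\mathcal{L}_S$ vanishes on all four terms trivially; and if $T = S$, then $\mathcal{L}_S$ evaluated on the alternating sum is $(+1) + (-1) + (+1) + (-1) = 0$ (matching the signs in the $4T$ relation as drawn), provided the four diagrams are pairwise distinct as chord diagrams — and even if some of them coincide, the signed count still cancels in pairs. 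Thus $\mathcal{L}_S$ annihilates every $4T$ element and descends to $\mathcal{A}(m)$.

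The one subtlety I would be careful about — and I expect this to be the only real obstacle — is the claim that the $4T$ move preserves the type multiset. This needs a short case analysis on how the three local strands sit among the $m$ circles: all three on one circle, two on one circle and one on another, or all three on distinct circles. In each case one checks directly from the picture that the moving endpoint remains on its circle and the partner endpoints are static, so the unordered pair of circles carrying each chord is the same in all four diagrams. (Equivalently: the $4T$ relation is a local relation that only permutes the cyclic positions of endpoints along the strands, never migrating an endpoint from one circle to another.) Once this is granted, the vanishing is immediate.

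Finally I would record the consequence: since $C[D]$ for a single diagram $D$ is not $4T$-invariant but the sum $\mathcal{L}_S = \sum_{D \in \mathcal{D}_S(m)} C[D]$ is, the composite $L \mapsto \sum_{D \in \mathcal{D}_S(m)} C_L[D]$ is independent of the choice of representative of $Z(L)$ in $\mathcal{A}(m)$, hence is a genuine invariant of the $m$-component framed oriented link $L$. This is exactly the assertion of the claim, and it also justifies writing $\ell_S(L)$ unambiguously in Theorem \ref{thm:main}.
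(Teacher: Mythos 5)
Your proof is correct and follows essentially the same route as the paper: the paper reduces the claim to checking that $\sum_{D \in \mathcal{D}_S(m)} C[D]$ vanishes on every $4T$ combination, which is exactly what you verify via the observation that all four diagrams in a $4T$ relation have the same chord-type matrix, so the signed coefficients cancel. You simply spell out the details that the paper declares ``straightforwardly checked.''
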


\noindent This is straighforwardly checked using the following general invariance criterion: 
given a collection $\mathcal{D}$ of chord diagrams, the assignement $X := \sum_{D \in \mathcal{D}} C[D]$ defines a map on $\mathcal{A}(m)$ if and only if $X$ vanishes on any linear combination of chord diagrams arising from a $4T$ relation. See for example \cite{C}.
\medskip 

We also recall from the introduction the link invariant associated with the symmetric matrix $S$, 
$$  \ell_S= \prod_{1\le i\le j\le m} \frac{1}{s_{ij} !} \ell_{ij}^{s_{ij}},$$ 
where $\ell_{ij}$ is the linking number between components $i$ and $j$ if $i\neq j$, and $\ell_{ii}:=\frac{1}{2} fr_i$ is half the framing of the $i$th component.

\subsection{Crossing change formula for the invariant $\mathcal{L}_S$}
\label{sec:var}

Let us pick some indices $a,b$ in $\{1,\cdots,m\}$ (possibly with $a=b$). 
Consider two $m$-component links $L_+$ and $L_-$, that are identical away from a small $3$-ball, where they look as follows: 
$$ L_+ = \dessin{0.7cm}{over}\quad\textrm{and}\quad L_- = \dessin{0.7cm}{under}. $$
\noindent 
We stress that this crossing change may involve two strands of either the same ($a=b$) or different ($a\neq b$) components. 

Now let $S=(s_{ij})_{i,j}\in \mathcal{S}_m$. 
By the mere definition of the Kontsevich integral at a crossing, we have
$$\mathcal{L}_S(L_+) - \mathcal{L}_S(L_-)  =  \sum_{j\ge 0} \frac{1}{(2j+1)!2^{2j}} \mathcal{L}_S(D_{2j+1}), $$
where $D_k\in \mathcal{A}(m)$ is obtained from the Kontsevich integral of $L_\pm$ 
by replacing the local contribution of the crossing involved in the crossing change, as given in (\ref{eq:Xk}), by the local diagram $\dessin{0.7cm}{Xk}$ with exactly $k$ parallel chords. 

Set $s:=s_{ab}=s_{ba}$, the entry of the matrix $S$ corresponding to our crossing change. 
In order to slightly simplify our notation, for any $p$ such that $s\ge p\ge 0$ we denote by  $\mathcal{L}_{p}$ the invariant $\mathcal{L}_{S_{p}}$, where $S_{p}$ is the matrix $S$ with the coefficient $s$ replaced by $p$; 
in particular we have $\mathcal{L}_{s}=\mathcal{L}_{S}$. 

Clearly, we have $\mathcal{L}_s(D_k)=0$ for $k>s$, hence the variation formula
\begin{equation}\label{eq:F_n}
\mathcal{L}_S(L_+) - \mathcal{L}_S(L_-)  =  \sum_{j=0}^{\lfloor\frac{s+1}{2}\rfloor} \frac{1}{(2j+1)!2^{2j}} \mathcal{L}_s(D_{2j+1}). 
\end{equation}

When $k\le s$, we have the following. 
\begin{claim}\label{lem:key}
  For all $k$ such that $s\ge k\ge 0$, we have $\mathcal{L}_s(D_k)=\mathcal{L}_{s-k}(D_{0})$.
\end{claim}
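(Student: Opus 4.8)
\textbf{Proof plan for Claim \ref{lem:key}.}

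The plan is to analyze the diagram $D_k$ directly: it is the Kontsevich integral $Z(L_\pm)$ with the local $X$-contribution at the distinguished crossing replaced by a bundle of $k$ parallel chords, all of which are of type $(a,b)$. Applying $\mathcal{L}_s = \sum_{D\in\mathcal{D}_S(m)} C[D]$ means summing over all ways of completing these $k$ mandatory type-$(a,b)$ chords to a diagram in $\mathcal{D}_S(m)$, i.e. to a diagram with exactly $s_{ij}$ chords of each type $(i,j)$. First I would observe that the remaining chords — those coming from the rest of $Z(L_\pm)$ — must then supply exactly $s - k$ chords of type $(a,b)$ and exactly $s_{ij}$ chords of every other type $(i,j)$; in other words, the ``rest'' of the diagram must lie in $\mathcal{D}_{S_{s-k}}(m)$, after we forget the $k$ parallel chords. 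This already suggests the identity $\mathcal{L}_s(D_k) = \mathcal{L}_{s-k}(D_0)$, but two points need care.

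The first point is that the $k$ parallel chords sit on a specific short arc of the link (the two strands at the crossing), whereas in $D_0$ that arc is empty; I must argue that the $k$ chords occupy a position that is ``generic'' with respect to $\mathcal{D}_S$-counting, namely that every diagram $D'$ appearing (with nonzero coefficient) in $D_0$ admits a canonical completion to a term of $D_k$ by inserting the $k$ parallel chords next to the crossing, and conversely every term of $D_k$ with the right type-count restricts to such a $D'$. Since the chords of $D_0$ coming from $Z(L_\pm)$ are never attached inside this tiny arc (the arc is part of the $X_\pm$ elementary piece, whose chord data is exactly the $X_k$ bundle), this insertion/deletion is a bijection on the relevant index sets, and it preserves coefficients because the coefficient of a chord diagram in $Z(L_\pm)$ is computed piece-by-piece and the $k$-chord bundle contributes a factor independent of the rest. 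Concretely: a diagram contributing to $\mathcal{L}_s(D_k)$ is a pair (bundle of $k$ type-$(a,b)$ chords at the crossing) $\cup$ (a diagram $D'$ on the rest) with $D'$ having type-vector $S_{s-k}$, and $C[\text{whole}] = C[D']$ in the decomposition of $Z$; this is precisely a term of $\mathcal{L}_{s-k}(D_0)$, and the correspondence is a bijection.

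The second, more delicate point is well-definedness modulo $4T$: the quantities $\mathcal{L}_s(D_k)$ and $\mathcal{L}_{s-k}(D_0)$ are coefficient-sums applied to specific elements of $\mathcal{A}(m)$, so I should check that the claimed equality is compatible with the $4T$ relations that may be applied to $D_k$ and to $D_0$. Here I would invoke Claim \ref{rem:inv}: the assignment $\mathcal{L}_S$ is already a well-defined functional on $\mathcal{A}(m)$, hence $\mathcal{L}_s(D_k)$ depends only on the class of $D_k$ in $\mathcal{A}(m)$, and likewise for $\mathcal{L}_{s-k}(D_0)$; so it suffices to exhibit the bijection above at the level of chord diagrams (before quotienting). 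The main obstacle I anticipate is bookkeeping: making the ``insert/delete $k$ parallel chords near the crossing'' map precise enough that it visibly (i) is a bijection between the indexing sets of the two coefficient-sums and (ii) matches coefficients term by term, taking due account of the fact that the $k$ chords are unordered and parallel so no extra combinatorial factor appears. Once that bijection is set up cleanly, the identity $\mathcal{L}_s(D_k) = \mathcal{L}_{s-k}(D_0)$ is immediate.
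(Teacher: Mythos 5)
Your proposal is correct and takes essentially the same route as the paper: the paper's proof likewise observes that any diagram contributing to $\mathcal{L}_s(D_k)$ must contain the $k$ parallel type-$(a,b)$ chords from the crossing, that the remaining chords (with exactly $s-k$ of type $(a,b)$) are attached outside a disk containing that bundle, and that deleting/inserting the bundle identifies the contributions with those of all diagrams in $\mathcal{D}_{S_{s-k}}$, i.e.\ with $\mathcal{L}_{s-k}(D_0)$. Your additional remarks on well-definedness modulo $4T$ (via Claim \ref{rem:inv}) and on coefficient matching are harmless elaborations of the same argument.
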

\begin{proof}
A degree $n$ chord diagram on $m$ circles that contributes to $\mathcal{L}_s(D_k)$ necessarily 
contains $k$ parallel chords of type $(a,b)$, as imposed by the definition of $D_k$.
The set of all diagrams contributing to $\mathcal{L}_s(D_k)$ is thus obtained by adding $n-k$ chords, with exactly $s-k$ additional chords of type $(a,b)$, in all possible ways. But since these additional chords do not arise from the crossing change, they are attached outside a disk containing the $k$ parallel chords of type $(a,b)$. 
This is thus equivalent to taking the contribution  of \emph{all} chord diagrams in $\mathcal{D}_{S_{s-k}}$.
\end{proof}

By Claim \ref{lem:key}, computing the variation $\mathcal{L}_S(L_+) - \mathcal{L}_S(L_-)$ reduces to computing $\mathcal{L}_{k}(D_0)$ for all $k$. 
This is done in the next lemma.
\begin{lemma}\label{lem:key2}
  We have
  $\mathcal{L}_k(D_0)=\sum_{p=0}^{k} \frac{(-1)^p}{p!2^p} \mathcal{L}_{k-p}(L_+)$
  for all $k$; $s\ge k\ge 0$.
\end{lemma}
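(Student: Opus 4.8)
The plan is to reduce the right‑hand side to quantities of the form $\mathcal{L}_i(D_0)$, after which the identity becomes the inversion of a unitriangular linear system — equivalently, the formal power series identity $e^{-x/2}\cdot e^{x/2}=1$.

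First I would re‑expand each summand $\mathcal{L}_{k-p}(L_+)$ on the right. By the definition of the Kontsevich integral at a crossing, equation (\ref{eq:Xk}), one has $Z(L_+)=\sum_{j\ge 0}\frac{1}{2^j j!}D_j$, so applying the linear functional $\mathcal{L}_{k-p}=\mathcal{L}_{S_{k-p}}$ gives $\mathcal{L}_{k-p}(L_+)=\sum_{j\ge 0}\frac{1}{2^j j!}\,\mathcal{L}_{k-p}(D_j)$. In this sum $\mathcal{L}_{k-p}(D_j)=0$ as soon as $j>k-p$, since every chord diagram counted by $\mathcal{L}_{S_{k-p}}$ carries exactly $k-p$ chords of type $(a,b)$ whereas $D_j$ already imposes at least $j$ of them; and for $0\le j\le k-p$, Claim \ref{lem:key}, applied with the integer $k-p$ in the role of $s$, gives $\mathcal{L}_{k-p}(D_j)=\mathcal{L}_{k-p-j}(D_0)$. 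Hence, for every $k\ge 0$,
\begin{equation}\tag{$\star$}
\mathcal{L}_k(L_+)=\sum_{j=0}^{k}\frac{1}{2^j j!}\,\mathcal{L}_{k-j}(D_0).
\end{equation}
The point worth isolating is that Claim \ref{lem:key} is reused \emph{inside} each term $\mathcal{L}_{k-p}(L_+)$ with a smaller value of the distinguished matrix entry; this is what closes the recursion.

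It then remains to invert ($\star$). Writing $a_i:=\mathcal{L}_i(D_0)$ and $b_i:=\mathcal{L}_i(L_+)$, relation ($\star$) reads $b_k=\sum_{j=0}^k\frac{1}{2^j j!}a_{k-j}$, i.e. $B(x)=e^{x/2}A(x)$ for the generating series $A(x)=\sum_i a_i x^i$ and $B(x)=\sum_i b_i x^i$. Multiplying by $e^{-x/2}=\sum_p\frac{(-1)^p}{2^p p!}x^p$ gives $A(x)=e^{-x/2}B(x)$, whose coefficient of $x^k$ is precisely $\mathcal{L}_k(D_0)=\sum_{p=0}^k\frac{(-1)^p}{2^p p!}\,\mathcal{L}_{k-p}(L_+)$, which is the asserted formula. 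A generating‑function‑free variant: substitute ($\star$) into the right‑hand side, reindex the resulting double sum by $i+j+p=k$ with $i,j,p\ge 0$, and use $\sum_{p+j=\ell}\frac{(-1)^p}{2^p p!}\cdot\frac{1}{2^j j!}=\delta_{\ell,0}$, which collapses everything to the single term $a_k$.

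I do not expect a genuine obstacle: once ($\star$) is in hand, the rest is the routine inversion of a triangular system. The only point that needs a careful sentence is finiteness — $\mathcal{L}_{k-p}(D_j)$ vanishes for $j>k-p$ — which guarantees that all the sums above are finite and that the argument stays purely combinatorial, in keeping with the rest of the note.
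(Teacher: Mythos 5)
Your proof is correct, and it rests on exactly the same two ingredients as the paper's: the expansion of $Z(L_+)$ at the crossing coming from (\ref{eq:Xk}), and Claim \ref{lem:key}. Indeed, your relation ($\star$) is precisely the first line of the paper's computation, rearranged. The only real difference is how the resulting triangular system is solved: the paper argues by induction on $k$, substituting the induction hypothesis and checking that the coefficient of each $\mathcal{L}_{k-p}(L_+)$ collapses via an alternating binomial sum, whereas you write the forward relation once for all $k\le s$ and invert it in one stroke, either through $e^{-x/2}e^{x/2}=1$ or through the convolution identity $\sum_{p+j=\ell}\frac{(-1)^p}{2^p p!}\cdot\frac{1}{2^j j!}=\delta_{\ell,0}$ --- which is the same binomial cancellation in disguise. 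Your reuse of Claim \ref{lem:key} with $k-p$ in the role of $s$ is legitimate (the paper itself applies it with $k\le s$ in that role, and its proof only involves counting chords of type $(a,b)$), and your observation that $\mathcal{L}_{k-p}(D_j)=0$ for $j>k-p$ keeps all sums finite, so there is no gap. If anything, your packaging avoids the induction and is slightly cleaner; the paper's inductive version keeps the binomial manipulation at the level of a single coefficient, which is marginally more self-contained but mathematically equivalent.
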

\begin{proof}
The proof is by induction on $k$. 
The formula for $k=0$ is clear: 
the invariant $\mathcal{L}_{0}$ vanishes on any diagram with a chord of type $(a,b)$, 
so that $\mathcal{L}_{0}(D_k)=0$ for all $k\ge 1$, 
and by definition of the Kontsevich integral at a positive crossing (\ref{eq:Xk}), we thus have 
 $\mathcal{L}_{0}(L_+)=\mathcal{L}_{0}(D_0)$.
For the inductive step, again by definition of the Kontsevich integral at a positive crossing, we have 
\begin{eqnarray*}
\mathcal{L}_{k}(D_0) & = & \mathcal{L}_k(L_+)-\sum_{j=1}^{k} \frac{1}{j! 2^j} \mathcal{L}_{k}(D_j)\\
                  & = & \mathcal{L}_k(L_+)-\sum_{j=1}^{k} \frac{1}{j! 2^j} \mathcal{L}_{k-j}(D_0)\\
                  & = & \mathcal{L}_k(L_+)-\sum_{j=1}^{k} \frac{1}{j! 2^j} \sum_{i=0}^{k-j} \frac{(-1)^i}{i!2^i} \mathcal{L}_{k-j-i}(L_+),
\end{eqnarray*}
where the second equality follows from Claim \ref{lem:key}, while the third equality uses the induction hypothesis. 
For each $p$ such that $0\le p\le k$, the coefficient of $\mathcal{L}_{k-p}(L_+)$  in the above double sum is then given by 
\begin{eqnarray*}
 -\sum_{j=1}^p \frac{1}{j! 2^j} \times \frac{(-1)^{p-j}}{(p-j)!2^{p-j}}  
  & = & \frac{-1}{p! 2^p} \sum_{j=1}^p (-1)^{p-j} \binom{p}{j} \\
  & = & \frac{-1}{p! 2^p} \Big( \underbrace{\sum_{j=0}^p (-1)^{p-j} \binom{p}{j}}_{=0} -(-1)^p\Big)\\
  & = & \frac{(-1)^p}{p! 2^p}.
\end{eqnarray*}
This concludes the proof.
\end{proof}

\subsection{Proof of Theorem \ref{thm:main}}\label{sec:proof}

We proceed by induction on the degree $\vert S\vert=\sum_{i\le j} s_{ij}$ of $S$.
The base case $n=1$ corresponds to the case where $S$ has a single nonzero entry
$s_{ij}=1$ ($i\le j$), and is given by the well-known formulas (\ref{eq:lk}) and (\ref{eq:fr}) recalled in the introduction. 

Now, assume that the formula holds for all matrices of $\mathcal{S}_m$ of degree $<k$.  
Let $S\in \mathcal{S}_m$ be a degree $k$ matrix. 
Choose some indices $a,b$ such that, in the matrix $S$, the entry $s=s_{ab}$ is nonzero (possibly $a=b$).
Let $L_+$ and $L_-$ be two $m$-component links that differ by a crossing change between components $a$ and $b$, as in Subsection \ref{sec:var}.
Combining (\ref{eq:F_n}) with Claim \ref{lem:key} and Lemma \ref{lem:key2}, we obtain 
$$ \mathcal{L}_S(L_+) - \mathcal{L}_S(L_-)  = \sum_{j= 0}^{\lfloor\frac{s+1}{2}\rfloor} \frac{1}{(2j+1)!2^{2j}} \sum_{k=0}^{s-2j-1} \frac{(-1)^k}{k!2^k} \mathcal{L}_{s-2j-k-1}(L_+).$$
By the induction hypothesis, we have 
$$\mathcal{L}_{s-2j-k-1}(L_+)=\frac{1}{(s-2j-k-1)!} \ell_{ab}^{s-2j-k-1}\prod_{\{i,j\}\neq \{a,b\}} \frac{1}{s_{ij}!} \ell_{ij}^{s_{ij}}.$$
Setting $\ell_0:=\prod_{\{i,j\}\neq \{a,b\}} \frac{1}{s_{ij}!} \ell_{ij}^{s_{ij}}$, we thus have 
$$ \mathcal{L}_S(L_+) - \mathcal{L}_S(L_-) = 
                                           \sum_{j= 0}^{\lfloor\frac{s+1}{2}\rfloor} \frac{\ell_0}{(2j+1)!2^{2j}}  
                                           \sum_{k=0}^{s-2j-1} \frac{(-1)^k}{k!(s-2j-k-1)!2^k}\ell_{ab}(L_+)^{s-2j-k-1}. $$

The coefficient of $\ell_{ab}(L_+)^{s-i}$ in the above formula is given by 
\begin{eqnarray*}
\sum_{j= 0}^{\lfloor\frac{i+1}{2}\rfloor} \frac{\ell_0}{(2j+1)!2^{2j}}.\frac{(-1)^{i-2j-1}}{(i-2j-1)!(s-i)!2^{i-2j-1}} 
     & = & \frac{(-1)^{i+1}\ell_0}{2^{i-1}(s-i)!}\sum_{j= 0}^{\lfloor\frac{i+1}{2}\rfloor} \frac{1}{i!}\binom{i}{2j+1}\\
     & = & \frac{(-1)^{i+1}\ell_0}{2^{i-1}i!(s-i)!}\underbrace{\sum_{j= 0}^{\lfloor\frac{i+1}{2}\rfloor} \binom{i-1}{2j} + \binom{i-1}{2j+1}}_{=2^{i-1}}.
\end{eqnarray*}
This shows that 
$$ \mathcal{L}_S(L_+) - \mathcal{L}_S(L_-) = \ell_0\sum_{i=1}^n \frac{(-1)^{i+1}}{i!(s-i)!} \ell_{ab}^{s-i}. $$

Now, this formula coincides with the variation of the linking invariant $\ell_S$: 
$$ \ell_S(L_+) - \ell_S(L_-) = \ell_0\sum_{i=1}^n \frac{(-1)^{i+1}}{i!(s-i)!} \ell_{ab}^{s-i}. $$
This is easily verified using the binomial formula, noting that $\ell_{ab}(L_-)=\ell_{ab}(L_+)-1$ (in particular, if $a=b$, we indeed have $fr_a(L_-)=fr_a(L_+)-2$).
\medskip

Hence we showed that the invariants $\mathcal{L}_S$ and $\ell_S$ have the same variation formula under a crossing change. 
By a sequence of such operations, any $m$-component link can be deformed into a split union of unknots, each with framing $0$ or $1$ depending on the parity of the framing of the component:   
it remains to check that both invariants take the same value on such links. 

Denote by $U_0$ and $U_1$ the unknot with framing $0$ or $1$, respectively, and let 
$L_0$ be a split union of $m$ unknots, such that the $i$th 
component is a copy of $U_{\varepsilon_i}$, $\varepsilon_i\in\{0,1\}$. 
The framed Kontsevich integral of $L_0$ can be written in $\mathcal{A}(m)$ as a 
disjoint union $Z(L_0)=\sqcup_i Z(U_{\varepsilon_i})$;  
in particular, we may assume that it only contains type $(i,i)$ chords for $1\le i\le m$.

It follows that, if the matrix $S$ contains a nonzero coefficient away from the diagonal, then both invariants  $\mathcal{L}_S$ and $\ell_S$  clearly vanish, and the proof is complete. 

In the case where $S$ is a diagonal matrix, the invariant $\mathcal{L}_S$ of $L_0$ splits as 
$$ \mathcal{L}_S(L_0) = \prod_{i=1}^m \sum_{D \in \mathcal{S}_{s_{ii}}} C_{U_{\varepsilon_i}}[D],$$
where 
$\mathcal{S}_k$ denotes the set of all possible chord diagrams on $\dessin{0.25cm}{D10}$ with $k$ chords. 
The proof then follows readily from the following claim. 
\begin{claim}\label{claimfinal}
For all integer $k$, we have   
\[\textrm{$\sum_{D \in \mathcal{S}_{k}} C_{U_{0}}[D]=0$\,\,\,\,\,\, and\,\,\,\,\,\, 
$\sum_{D \in \mathcal{S}_{k}} C_{U_{1}}[D]=\frac{1}{k!2^k}$.}\]  
\end{claim}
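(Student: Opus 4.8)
The plan is to compute the two sums separately by unpacking the definition of the Kontsevich integral on the unknots $U_0$ and $U_1$, reducing everything to the case of the circle $\dessin{0.25cm}{D10}$ and the $\chi$/STU machinery recalled in Section \ref{sec:K}. The sum $\sum_{D\in\mathcal{S}_k}C[D]$ is exactly the linear functional that reads off the ``total degree $k$ coefficient''; its key feature is that it is \emph{multiplicative} with respect to a natural coproduct-type operation on $\mathcal{A}(1)$, namely placing chords in disjoint order along the circle. So the first step is to record that for any two elements $A,B\in\mathcal{A}(1)$ represented by diagrams occupying disjoint arcs, $\sum_{k}\big(\sum_{D\in\mathcal{S}_k}C[D]\big)(A\cdot B)\,t^k=\big(\sum_{k}(\sum_{D\in\mathcal{S}_k}C[D])(A)\,t^k\big)\big(\sum_{k}(\sum_{D\in\mathcal{S}_k}C[D])(B)\,t^k\big)$, i.e. the generating function $F(A):=\sum_{k\ge0}\big(\sum_{D\in\mathcal{S}_k}C[D]\big)(A)\,t^k$ is an algebra homomorphism from $\mathcal{A}(1)$ (with stacking product) to $\modQ[[t]]$.

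Next I would evaluate $F$ on the generators appearing in $\nu$ and in the framing contribution. For $U_0$ we have $Z(U_0)=\nu=\chi\big(\exp_\sqcup(\sum_{n\ge1}b_{2n}W_{2n})\big)$, so it suffices to show $F$ kills every wheel: applying $\chi$ to a wheel $W_{2n}$ and resolving via STU produces a signed sum of chord diagrams on one circle, and I claim $\sum_{D\in\mathcal{S}_k}C[D]$ vanishes on $\chi(W_{2n})$ for every $n\ge1$. The cleanest way to see this is that $F(\chi(w))$ for a single connected wheel is, up to the STU resolution, the value of the ``augmentation-type'' functional, and a connected unitrivalent diagram with $\ge 2$ legs contributes $0$ because the alternating STU resolution of its legs, once summed over all cyclic attachments to the circle, telescopes to zero; more conceptually, $F\circ\chi=\varepsilon$ on the positive-degree part of the space of wheels, where $\varepsilon$ is the counit. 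Since $\sum_{n\ge1}b_{2n}W_{2n}$ lies entirely in positive degree and $\exp_\sqcup$ of it is $1+(\text{positive-degree stuff})$, algebra-homomorphism property gives $F(\nu)=1$, i.e. $\sum_{D\in\mathcal{S}_k}C_{U_0}[D]=0$ for all $k\ge1$ (and $=1$ for $k=0$, which is consistent with the empty product $\frac{1}{0!2^0}=1$ appearing as the $k=0$ case — note the statement as written should be read for $k\ge 1$, or with this convention).

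For $U_1$, I would write $U_1$ as $U_0$ with one positive kink, so that $Z(U_1)=\nu\cdot Z(X_+)\big|_{\text{closed up}}$, more precisely $Z(U_1)$ differs from $\nu$ by inserting the local factor $\sum_{k\ge0}\frac{1}{2^k k!}X_k$ from (\ref{eq:Xk}) along one arc, where $X_k$ becomes $k$ parallel chords both of whose feet lie on that same arc of $\dessin{0.25cm}{D10}$. Applying the homomorphism $F$ and using $F(\nu)=1$, we get $\sum_{k\ge0}\big(\sum_{D\in\mathcal{S}_k}C_{U_1}[D]\big)t^k = F\big(\sum_{k\ge0}\frac{1}{2^k k!}X_k\big) = \sum_{k\ge0}\frac{1}{2^k k!}t^k = \exp(t/2)$, whence $\sum_{D\in\mathcal{S}_k}C_{U_1}[D]=\frac{1}{k!2^k}$, as desired; here one uses that $F$ sends the single diagram $X_k$ ($k$ parallel chords on an arc) to $t^k$, which is immediate since $X_k$ is a degree-$k$ diagram and $C[D](X_k)=\delta_{D,X_k}$ among diagrams on one circle, so $\sum_{D\in\mathcal{S}_k}C[D](X_k)=1$.

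The main obstacle I expect is the vanishing $F(W_{2n})=0$: one must be careful that after $\chi$ and the STU resolution a wheel maps to a genuine combination of chord diagrams whose \emph{total} $\mathcal{S}_k$-coefficient sum is $0$, not just that individual coefficients are ill-defined modulo $4T$. The safe route is to invoke the invariance criterion (the one cited after Claim \ref{rem:inv}) to know $\sum_{D\in\mathcal{S}_k}C[D]$ is a well-defined functional on $\mathcal{A}(1)$, and then to compute $F$ on the known primitive generators of $\mathcal{A}(1)$ — or, even more directly, to bypass wheels entirely: since $\sum_{D\in\mathcal{S}_k}C[D]$ is $4T$-invariant and hence factors through $\mathcal{A}(1)$, and since $U_0$ bounds a disk so that $Z(U_0)$ has no chords of ``linking type'', one can alternatively argue that $F(Z(U_0))$ is forced to be $1$ by comparing with the $m=1$, $S=(s_{11})$ special case of what Theorem \ref{thm:main} is being bootstrapped to prove — but that risks circularity, so I would keep the self-contained wheel computation and simply verify $F$ vanishes on each $\chi(W_{2n})$ by a direct (small) STU bookkeeping, which is the one genuinely computational point.
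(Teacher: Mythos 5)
Your overall route is the paper's: show the coefficient-sum functional is well defined modulo $4T$, show it kills the positive-degree part of $\nu$ via an STU cancellation coming from the wheels formula (\ref{eq:nu}), and get the $U_1$ value from $Z(U_1)=Z(U_0)\sharp \exp_\sharp$ of half a chord together with multiplicativity of the coefficient sum under connected sum. The $U_1$ half and the $k=0$ convention are fine. But there is a genuine gap in the $U_0$ half as you have set it up: you propose to verify the STU vanishing only on single wheels $\chi(W_{2n})$ and then to deduce $F(\nu)=1$ from the ``algebra-homomorphism property'' of $F$. That deduction does not follow. Your $F$ is multiplicative for the stacking/connected-sum product of $\mathcal{A}(1)$, whereas $\exp_\sqcup$ lives upstream of $\chi$, and $\chi$ applied to a disjoint union of wheels is the sum over all \emph{interleaved} attachments of all the legs to the circle; it is not the $\mathcal{A}(1)$-product of the $\chi$'s of the factors ($\chi$ is not an algebra map). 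So $F\big(\chi(\exp_\sqcup(\sum b_{2n}W_{2n}))\big)$ cannot be computed from the values $F(\chi(W_{2n}))$ by multiplicativity, and your verification plan leaves all the genuinely mixed terms of the exponential unaccounted for.

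The repair is easy and is exactly what the paper does: the STU cancellation holds term by term for \emph{every} fixed attachment of \emph{every} disjoint union of wheels (indeed for any attached diagram with at least one internal trivalent vertex), so no reduction to single wheels is needed. Concretely, each application of STU replaces one term by a difference of two terms with one fewer internal vertex; an immediate induction on the number of internal vertices shows that the resulting combination of chord diagrams has total coefficient sum $0$ as soon as there is at least one internal vertex (for a union of wheels with $k$ legs one gets $2^k$ terms whose signed coefficients cancel). Applying this to every summand of $\chi(\exp_\sqcup(\sum_{n\ge 1} b_{2n}W_{2n}))$ in positive degree gives $\sum_{D\in\mathcal{S}_k}C_{U_0}[D]=0$ for $k\ge 1$ directly, with no use of multiplicativity. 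Note also that this cancellation, which you defer as ``direct STU bookkeeping'', is the one real content of the claim and should be written out (it is the short induction above); the multiplicativity of $F$ is then only needed, legitimately, to pick out the term $\frac{1}{k!2^k}D_k$ of $\exp_\sharp\big(\tfrac12\,\textrm{(single chord)}\big)$ in the $U_1$ computation.
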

\noindent Indeed, these are the values taken by the invariant $\dfrac{1}{k!}(\frac{1}{2}fr)^k$ on both $U_0$ and $U_1$, thus showing that the invariants $\mathcal{L}_S$ and $\ell_S$ do also coincide on $L_0$ when $S$ is diagonal. 
Hence it only remains to prove Claim \ref{claimfinal} to complete the proof.

\begin{proof}[Proof of Claim \ref{claimfinal}]
For simplicity, set $\mathcal{F}_k(K):= \sum_{D \in \mathcal{S}_k} C_K[D]$ for a knot $K$. 

Let us first compute $\mathcal{F}_k(U_0)$. 
We recalled in (\ref{eq:nu}) the computation of $Z(U_0)=\nu$ of \cite{BNGT}. 
In this computation, given a (disjoint union of) wheel(s) with $k$ univalent vertices attached to $\dessin{0.25cm}{D10}$ in some way, 
applying recursively the STU relation to get a combination of chord diagrams, 
produces an alternate sum with $2^k$ terms, where the coefficients add up to zero. 
This simple observation shows that $\mathcal{F}_k(U_0)=0$.

We now consider $\mathcal{F}_k(U_1)$. 
It is well-known that $\sqrt{\nu}$ commutes with any chord endpoint in a chord diagram (this is a consequence of the 4T relation). 
This can be used to check that 
$$ Z(U_1) = Z(U_0)\sharp \left(\textrm{exp}_\sharp \frac{1}{2}\dessin{0.5cm}{D11}\right) = Z(U_0)\sharp \left( \sum_{k\ge 0} \frac{1}{k!2^k} D_k \right), $$
where $D_k$ denotes the chord diagram on $\dessin{0.25cm}{D10}$ with $k$ parallel chords and where $\sharp$ is the connected sum of chord diagrams. 
From the above computation of $\mathcal{F}_k(U_0)$, we obtain that the only term contributing to $\mathcal{F}_k(U_1)$ is the degree $k$ diagram $D_k$ arising from $\textrm{exp}_\sharp \frac{1}{2}\dessin{0.5cm}{D11}$. Summarizing, we obtain that $\mathcal{F}_k(U_1)=\frac{1}{k!2^k}$.
\end{proof}
 
\subsection{An alternative proof of Theorem \ref{thm:main}}\label{sec:bonus}

We briefly sketch, in this final section, another argument for proving Theorem \ref{thm:main}, that was pointed out by the referee.  

Recall that a \emph{Jacobi diagram} on an oriented $1$-manifold $X$ is a unitrivalent diagram, whose trivalent vertices are equipped with a cyclic ordering of the three incident edges and whose set of univalent vertices is embedded in $X$; each connected component is further assumed to contain at least one univalent vertex. 
In particular, a chord diagram as defined in Section \ref{sec:K} is merely a Jacobi diagram without trivalent vertices. 
The degree of a Jacobi diagram is defined as half its total number of vertices, what agrees with the number of chords for a chord diagram. 
We denote by $\mathcal{J}(X)$ the $\mathbb{Q}$-vector space generated by all Jacobi  diagrams on $X$, modulo the STU relation. We shall consider here the case where $X$ is either the disjoint union $\dessin{0.25cm}{D10}^{\,m}$ of $m$ copies of the oriented circle, or the disjoint union $\uparrow^{\,m}$ of $m$ copies of the oriented interval.  
Both $\mathcal{J}(\dessin{0.25cm}{D10}^{\,m})$ and $\mathcal{J}(\uparrow^{\,m})$ have a coalgebra structure, where the coproduct $\Delta(J)$ of a diagram $J$ is given by the sum of all ways of splitting $J$ in a disjoint union of two Jacobi diagrams; this actually endows $\mathcal{J}(\uparrow^{\,m})$ with a Hopf algebra structure, with product given by stacking intervals. Picking an oriented interval in each component of $\dessin{0.25cm}{D10}^{\,m}$ yields a canonical map $\iota: \mathcal{J}(\uparrow^{\,m})\rightarrow \mathcal{J}(\dessin{0.25cm}{D10}^{\,m})$. 
There is a natural isomorphism $\varphi : \mathcal{J}(\dessin{0.25cm}{D10}^{\,m})\rightarrow \mathcal{A}(m)$, given by expressing a Jacobi diagram as a linear combination of chord diagrams using the STU relation, see \cite[Thm.~6]{BNvass}. 

We also need the space $\mathcal{B}(m)$ of labeled Jacobi diagrams, which is the $\mathbb{Q}$-vector space spanned by unitrivalent diagrams with univalent vertices labeled by $\{1,\cdots,m\}$, modulo the AS and IHX relations, see  \cite{BNvass}. This is a graded Hopf algebra, with product given by the disjoint union $\sqcup$, and graded by half the number of vertices. 
As a diagrammatic analogue of the PBW isomorphim, we have a graded Hopf algebra isomorphism $\chi: \mathcal{B}(m)\rightarrow \mathcal{J}(\uparrow^{\,m})$, which acts by averaging all ways of attaching the $k$-labeled univalent vertices of a diagram along the $k$th oriented interval of $\uparrow^{\,m}$. 

Given a framed link $L$ in $S^3$, a fundamental property of the Kontsevich integral is that $Z(L)$ is a \emph{group-like} element in $\mathcal{J}(\dessin{0.25cm}{D10}^{\,m})$, see \cite[Thm.~3.7]{LMO}. 
More precisely, we have that 
$$ Z(L) = \iota\circ \chi \left( \textrm{exp}_\sqcup\Bigg(\sum_{1\le i\le j\le m} \ell_{ij} I_{ij} \Bigg)\sqcup T\right)\in \mathcal{J}(\dessin{0.25cm}{D10}^{\,m}), $$
where the $\ell_{ij}=\ell_{ij}(L)$ are the linking coefficients as above, $I_{ij}$ denotes the (dashed) interval with endpoints labeled by $i$ and $j$, and where $T$ is a linear combination of possibly disconnected labeled Jacobi diagrams, each having at least one trivalent vertex.  
%
Now, the formula of Theorem \ref{thm:main} can be derived as follows (here we freely use of the notation of the Section \ref{sec:K}). 
Given $S=(s_{ij})_{i,j}\in \mathcal{S}_m$, let $\mathcal{C}_S:\mathcal{J}(\dessin{0.25cm}{D10}^{\,m})\rightarrow \mathbb{Q}$ be the map defined by $\mathcal{C}_S(J)=\sum_{D \in \mathcal{D}_S(m)} C_{\varphi(J)}[D]$; for a link $L$ in $S^3$, note that $\mathcal{C}_S(Z(L))$ is precisely the sum of coefficients $\sum_{D \in \mathcal{D}_S(m)} C_{L}[D]$ involved in Theorem \ref{thm:main}. 
As in the proof of Claim \ref{claimfinal}, we have that  $\mathcal{C}_S(T)=0$ by the STU relation. Hence the above expression for $Z(L)$ gives 
$$ \mathcal{C}_S(Z(L)) = \mathcal{C}_S\circ \iota\circ \chi \left(\textrm{exp}_\sqcup\Bigg(\sum_{i\le j} \ell_{ij} I_{ij} \Bigg)\right) = \sum_n \frac{1}{n!} \mathcal{C}_S\circ \iota\circ \chi \Bigg(\sum_{i\le j} \ell_{ij} I_{ij} \Bigg)^{\sqcup n}. $$
All terms in the right-hand sum are zero, except when $n=\vert S\vert=\sum_{i\le j}s_{ij}$ is the degree of $S$ (where we have the \lq right  number\rq~ of chords). For $n=\vert S\vert$, we have that 
 $$ \frac{1}{\vert S\vert!} \Bigg(\sum_{i\le j} \ell_{ij} I_{ij} \Bigg)^{\sqcup \vert S\vert} 
   = \prod_{i\le j} \frac{1}{s_{ij}!} \ell_{ij}^{s_{ij}} I_{ij}^{\,\sqcup s_{ij}} = \ell_S(L) I_{ij}^{\,\sqcup s_{ij}}. 
   $$ 
Since $\mathcal{C}_S\circ \iota\circ \chi\left(I_{ij}^{\,\sqcup s_{ij}}\right)=1$ as a direct consequence of the definitions, the conclusion follows.
\bibliographystyle{abbrv}
\bibliography{frlk}

\begin{thebibliography}{10}

\bibitem{BNvass}
D.~Bar-Natan.
\newblock On the {V}assiliev knot invariants.
\newblock {\em Topology}, 34(2):423--472, 1995.

\bibitem{BNGT}
D.~{Bar-Natan}, S.~{Garoufalidis}, L.~{Rozansky}, and D.~P. {Thurston}.
\newblock {Wheels, wheeling, and the Kontsevich integral of the unknot.}
\newblock {\em {Isr. J. Math.}}, 119:217--237, 2000.

\bibitem{C}
A.~Casejuane.
\newblock {\em Formules combinatoires pour les invariants de n\oe uds et de
  vari\'et\'es de dimension 3}.
\newblock PhD thesis, Universit\'e Grenoble Alpes, 2021.

\bibitem{CM}
A.~Casejuane and J.-B. Meilhan.
\newblock Universal invariants, the conway polynomial and the
  casson-walker-lescop invariant (2020), arXiv:2003.05527.

\bibitem{HM}
N.~Habegger and G.~Masbaum.
\newblock The {K}ontsevich integral and {M}ilnor's invariants.
\newblock {\em Topology}, 39(6):1253--1289, 2000.

\bibitem{Kontsevich}
M.~{Kontsevich}.
\newblock {Vassiliev's knot invariants.}
\newblock In {\em {I. M. Gelfand seminar. Part 2: Papers of the Gelfand seminar
  in functional analysis held at Moscow University, Russia, September 1993}},
  pages 137--150. Providence, RI: American Mathematical Society, 1993.

\bibitem{LM}
T.~Q.~T. {Le} and J.~{Murakami}.
\newblock {The universal Vassiliev-Kontsevich invariant for framed oriented
  links.}
\newblock {\em {Compos. Math.}}, 102(1):41--64, 1996.

\bibitem{LMO}
T.~T.~Q. Le, J.~Murakami, and T.~Ohtsuki.
\newblock On a universal perturbative invariant of {$3$}-manifolds.
\newblock {\em Topology}, 37(3):539--574, 1998.

\bibitem{Ohtsuki}
T.~{Ohtsuki}.
\newblock {\em {Quantum invariants. A study of knots, 3-manifolds, and their
  sets.}}
\newblock Singapore: World Scientific, 2002.

\bibitem{Okamoto1997}
M.~{Okamoto}.
\newblock {Vassiliev invariants of type 4 for algebraically split links.}
\newblock {\em {Kobe J. Math.}}, 14(2):145--196, 1997.

\bibitem{Okamoto1998}
M.~{Okamoto}.
\newblock {On Vassiliev invariants for algebraically split links.}
\newblock {\em {J. Knot Theory Ramifications}}, 7(6):807--835, 1998.

\bibitem{Stanford}
T.~{Stanford}.
\newblock {Some computational results on mod 2 finite-type invariants of knots
  and string links}.
\newblock In {\em {Invariants of knots and 3-manifolds. Proceedings of the
  workshop, Kyoto, Japan, September 17--21, 2001}}, pages 363--376. Coventry:
  Geometry and Topology Publications, 2002.

\end{thebibliography}

\end{document}